\journalname{Journal of Number Theory}
\theoremstyle{plain}
\newtheorem{theorem}{Theorem}[section]
\theoremstyle{definition}
\begin{document}

\begin{frontmatter}
	
	
	
	\dochead{}
	
	\title{Some New Ramanujan’s Modular Equations of Degree 15}
	\tnotetext[NSFC]{This work was supported by the National Natural Science Foundation of China (Grant No. 42174037).}	
	
	\author[1,2]{Zhang Chuan-Ding \corref{cor1}}
	\cortext[cor1]{Corresponding author}
	\ead{13607665382@163.net}

	\author[1,2]{Yang Li}
	\ead{241912296@qq.com}
	
	\address[1]{College of Geography and Environmental Science, Henan University, Zhengzhou 450046, China}
	\address[2]{Henan Technology Innovation Center of Spatio-Temporal Big Data, Henan University, Zhengzhou 450046, China}

	\begin{abstract}
		Ramanujan in his notebook recorded two modular equations involving multiplier with moduli of degrees (1,7)  and (1,23).   
		In this paper, we find some new Ramanujan’s modular equations  involving multiplier with moduli of degrees (3,5)  and (1,15), and give concise proofs by employing Ramanujan’s multiplier function equation.
	\end{abstract}
	
	\begin{keyword}
		Theta functions \sep Modular equations \sep Multiplier
		
		
		\MSC[2000] 11F20 \sep 33C05 
		
	\end{keyword}
	
\end{frontmatter}



\section{Introduction}
\label{}

Ramanujan defined modular equations as follows: Suppose that
\begin{equation}\label{eq01}
	n \frac{{}_2 F_1\left(\frac{1}{2}, \frac{1}{2} ; 1 ; 1-\alpha\right)}{{}_2 F_1\left(\frac{1}{2}, \frac{1}{2} ; 1 ; \alpha\right)}=\frac{{}_2 F_1\left(\frac{1}{2}, \frac{1}{2} ; 1 ; 1-\beta\right)}{{}_2 F_1\left(\frac{1}{2}, \frac{1}{2} ; 1 ; \beta\right)},
\end{equation}
holds for some positive integer $n$ or positive rational fraction $n=i/j$ (i and j are coprime). The relation between $\alpha$ and $\beta$ induced by the above equation is called a modular equation of degree $n$ and in such equations, we say that $\beta$ over $\alpha$ is degree $n$. Let
\begin{equation}\label{eq02}
	z_1={}_2 F_1\left(\frac{1}{2}, \frac{1}{2} ; 1 ; \alpha\right) ,\quad z_n={}_2 F_1\left(\frac{1}{2}, \frac{1}{2} ; 1 ; \beta\right),
\end{equation}
the multiplier $m$ of the modular equation is defined by
\begin{equation}\label{eq03}
	m:=\frac{z_1}{z_n}.
\end{equation}

Before proceeding to the main theta function identity of this paper, we shall first
recall certain known theta function identities which we need in the sequel. Throughout
the paper, we assume $|q|<1$. The following is the well-known Jacobi triple product
identity:
\begin{equation}\label{eq04}
	f(a, b):=\sum_{n=-\infty}^{\infty} a^{n(n+1) / 2} b^{n(n-1) / 2}=(-a ; a b)_{\infty}(-b ; a b)_{\infty}(a b ; a b)_{\infty}, \quad|a b|<1,
\end{equation}
the two particular facts of $f(a, b)$ (See \cite{Bruce1991},Entry 22, p36), are as follows
\begin{equation}\label{eq05}
	\varphi(q):=f(q, q)=\left(-q ; q^2\right)_{\infty}^2\left(q^2 ; q^2\right)_{\infty}, \quad \psi(q):=f\left(q, q^3\right)=\left(-q ; q^2\right)_{\infty}\left(q^2 ; q^2\right)_{\infty}
	.
\end{equation}

Again, let the base $q$ in the classical theory of elliptic functions is defined by 
\begin{equation}\label{eq06}
	q=\exp \left(-\pi\frac{{}_2 F_1\left(\frac{1}{2}, \frac{1}{2} ; 1 ; 1-\alpha\right)}{{}_2 F_1\left(\frac{1}{2}, \frac{1}{2} ; 1 ; \alpha\right)}\right),
\end{equation}
we have the following well-known identities (See \cite{Bruce1991},Entry 10 and 11, pp. 122-123)
\begin{equation}\label{eq07}
	z_1=\varphi^2(q),\quad z_n=\varphi^2(q^n),\quad m=\frac{\varphi^2(q)}{\varphi^2(q^n)},
\end{equation}
and
\begin{equation}\label{eq08}
	\frac{\sqrt[4]{q} \psi(q^2)}{\varphi (q)}= \frac{\sqrt[4]{\alpha }}{2},\quad \frac{\varphi (-q)}{\varphi (q)}= \sqrt[4]{1-\alpha },\quad \frac{q^{n/4} \psi (q^{2 n})}{\varphi \left(q^n\right)}= \frac{\sqrt[4]{\beta }}{2},\quad \frac{\varphi \left(-q^n\right)}{\varphi \left(q^n\right)}= \sqrt[4]{1-\beta },
\end{equation}
and
\begin{equation}\label{eq09}
	\frac{\sqrt[8]{q} \psi (q)}{\varphi (q)}= \frac{\sqrt[8]{\alpha }}{\sqrt{2}},\quad \frac{\varphi (-q^2)}{\varphi (q)}= \sqrt[8]{1-\alpha },\quad \frac{q^{n/8} \psi(q^n)}{\varphi \left(q^n\right)}= \frac{\sqrt[8]{\beta }}{\sqrt{2}},\quad \frac{\varphi (-q^{2 n})}{\varphi \left(q^n\right)}= \sqrt[8]{1-\beta }.
\end{equation}

Ramanujan found and recorded the following  modular equations involving multiplier with moduli of degrees 7  and 23,in tha case  degree $(n \bmod 8)=7$, respectively.
\begin{theorem}[See \cite{Bruce1991},Entry 19, pp. 314-315]If $\beta$ has degree 7 over $\alpha$, and $m$ is the multiplier for degree 7 , then
	\begin{equation}\label{eq10}
		(\alpha \beta)^{1 / 8}+\{(1-\alpha)(1-\beta)\}^{1 / 8}=1,
	\end{equation}
	and
	\begin{equation}\label{eq11}
		m-\frac{7}{m}=2\left((\alpha \beta)^{1 / 8}-\{(1-\alpha)(1-\beta)\}^{1 / 8}\right)\left(2+(\alpha \beta)^{1 / 4}+\{(1-\alpha)(1-\beta)\}^{1 / 4}\right),
	\end{equation}
	and
	\begin{equation}\label{eq12}
		\frac{\varphi^2(q)}{\varphi^2(q^7)}-7 \frac{\varphi^2(q^7)}{\varphi^2(q)}=2\left(2 q \frac{\psi(q) \psi(q^7)}{\varphi(q)\varphi(q^7)} -\frac{\varphi (-q^2)\varphi (-q^{14})}{\varphi(q)\varphi(q^7)}\right) \left(2+4 q^2 \frac{\psi(q^2) \psi(q^{14})}{\varphi(q)\varphi(q^7)}+\frac{\varphi (-q)\varphi (-q^7)}{\varphi(q)\varphi(q^7)}\right).
	\end{equation}
\end{theorem}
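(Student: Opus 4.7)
The plan is to first reduce the three claims to two by observing that \eqref{eq11} and \eqref{eq12} are equivalent given the parameterizations \eqref{eq07}--\eqref{eq09}. From \eqref{eq09} one reads off
\[
\frac{2q\,\psi(q)\psi(q^7)}{\varphi(q)\varphi(q^7)} = (\alpha\beta)^{1/8}, \qquad \frac{\varphi(-q^2)\varphi(-q^{14})}{\varphi(q)\varphi(q^7)} = \{(1-\alpha)(1-\beta)\}^{1/8},
\]
and from \eqref{eq08},
\[
\frac{4q^2\psi(q^2)\psi(q^{14})}{\varphi(q)\varphi(q^7)} = (\alpha\beta)^{1/4}, \qquad \frac{\varphi(-q)\varphi(-q^7)}{\varphi(q)\varphi(q^7)} = \{(1-\alpha)(1-\beta)\}^{1/4};
\]
combined with $m - 7/m = \varphi^2(q)/\varphi^2(q^7) - 7\varphi^2(q^7)/\varphi^2(q)$ from \eqref{eq07}, this shows \eqref{eq11} $\Longleftrightarrow$ \eqref{eq12}, so it suffices to establish \eqref{eq10} and \eqref{eq11}.

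For \eqref{eq10}, I would invoke the classical theta dissection
\[
\varphi(q)\varphi(q^7) = \varphi(-q^2)\varphi(-q^{14}) + 2q\,\psi(q)\psi(q^7),
\]
a standard degree-7 identity that can be proved by expanding each theta function via the Jacobi triple product \eqref{eq04} and separating terms by residue class modulo 7. Dividing both sides by $\varphi(q)\varphi(q^7)$ and applying \eqref{eq09} term-by-term yields \eqref{eq10} at once.

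For \eqref{eq11}, set $A=(\alpha\beta)^{1/8}$ and $B=\{(1-\alpha)(1-\beta)\}^{1/8}$, so that \eqref{eq10} reads $A+B=1$ and the right-hand side of \eqref{eq11} becomes $2(A-B)(2 + A^2 + B^2) = 2(A-B)(3-2AB)$. The main step is to invoke Ramanujan's multiplier function equation for degree 7---the principal tool emphasized in the abstract---to express $m - 7/m$ as an explicit algebraic function of $A$ and $B$, and then verify the resulting identity using $A+B=1$. I expect this to be the main obstacle: relating the theta quotient $\varphi^2(q)/\varphi^2(q^7)-7\varphi^2(q^7)/\varphi^2(q)$ to the product $2(A-B)(3-2AB)$ requires carefully exploiting degree-7 theta-function identities, such as auxiliary dissections of $\varphi(q)\varphi(q^7)\pm\varphi(-q)\varphi(-q^7)$ in terms of $\psi$'s, together with the multiplier formula. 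Once this algebraic identity is verified, the chain of equivalences above delivers both \eqref{eq11} and \eqref{eq12} at once.
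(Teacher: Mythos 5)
Your reduction of \eqref{eq12} to \eqref{eq11} via \eqref{eq07}--\eqref{eq09} is correct, and your route to \eqref{eq10} through the dissection $\varphi(q)\varphi(q^7)=\varphi(-q^2)\varphi(-q^{14})+2q\,\psi(q)\psi(q^7)$ is legitimate: dividing by $\varphi(q)\varphi(q^7)$ and applying \eqref{eq09} does give \eqref{eq10}. (Two caveats: that dissection is not obtained by ``separating terms by residue class modulo 7''---it is a Schr\"oter-type identity, essentially Entry 19(i) itself restated in theta form---and note that the paper never proves this theorem at all; it is quoted from Berndt, so the only internal benchmark is the paper's proof of the degree-15 analogue, Theorem 2.1.)

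The genuine gap is \eqref{eq11}, which you explicitly defer as ``the main obstacle'' without carrying it out, and the hints you give (dissections of $\varphi(q)\varphi(q^7)\pm\varphi(-q)\varphi(-q^7)$) do not connect to the multiplier function equation, which is a \emph{differential} relation, not a theta identity. The way the paper actually deploys \eqref{eq22} is through parameterization: set $x=(\alpha\beta)^{1/8}$, $y=\{(1-\alpha)(1-\beta)\}^{1/8}$, invert to get $\alpha,\beta$ as in \eqref{eq24}, and derive from \eqref{eq22} the closed forms \eqref{eq25}--\eqref{eq26} for $n/m^2$ and $(m-n/m)^2$ in terms of $x$, $y$ and their derivatives along the modular curve. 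For degree $7$ the curve \eqref{eq10} is $x+y=1$, so one takes $x=t$, $y=1-t$, substitutes into \eqref{eq26} with $n=7$, and checks that the resulting rational function of $t$ equals $\bigl(2(2t-1)(3-2t+2t^2)\bigr)^2$, which is the square of your $2(A-B)(2+A^2+B^2)$. That polynomial verification (plus a sign check, exactly as the paper does in comparing \eqref{eq32} with \eqref{eq33}) is the entire content of \eqref{eq11}; without it your argument establishes only \eqref{eq10} and the equivalence \eqref{eq11}$\Leftrightarrow$\eqref{eq12}, not the theorem.
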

\begin{theorem}[See \cite{Bruce1991},Entry 15, p.411]
	If $\beta$ is of the 23 degree over $\alpha$,  and $m$ is the multiplier for same degree, then
	
	\begin{equation}\label{eq13}
		(\alpha \beta)^{1 / 8}+\{(1-\alpha)(1-\beta)\}^{1 / 8}+2^{2 / 3}\{\alpha \beta(1-\alpha)(1-\beta)\}^{1 / 24}=1,
	\end{equation}
	and
	\begin{equation}\label{eq14}
		\begin{aligned}
			m-\frac{23}{m} &= 2\left((\alpha \beta)^{1 / 8}-\{(1-\alpha)(1-\beta)\}^{1 / 8}\right)\left(11-13 \cdot 4^{1 / 3}\{\alpha \beta(1-\alpha)(1-\beta)\}^{1 / 24}\right. \\
			& +18 \cdot 2^{1 / 3}\{\alpha \beta(1-\alpha)(1-\beta)\}^{1 / 12}-14\{\alpha \beta(1-\alpha)(1-\beta)\}^{1 / 8} \left.+2^{5 / 3}\{\alpha \beta(1-\alpha)(1-\beta)\}^{1 / 6}\right) .
		\end{aligned}
	\end{equation}
\end{theorem}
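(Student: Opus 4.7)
The plan is to mirror the strategy the paper announces for its main results: translate everything into theta functions via (\ref{eq07})--(\ref{eq09}), establish the bare modular equation (\ref{eq13}) first, and then deduce the multiplier identity (\ref{eq14}) by appealing to Ramanujan's multiplier function equation. Since $23 \equiv 7 \pmod 8$, the shape of (\ref{eq13})--(\ref{eq14}) is an enhancement of the degree-7 theorem by one extra cube-root term, so the algebraic skeleton of the degree-7 proof should carry over with the cube-root variable treated as an independent parameter.

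First I would introduce the shorthand $u = (\alpha\beta)^{1/8}$, $v = \{(1-\alpha)(1-\beta)\}^{1/8}$, $w = \{\alpha\beta(1-\alpha)(1-\beta)\}^{1/24}$, recording the purely algebraic identity $w^{3} = uv$. Using (\ref{eq09}) with $n=23$, these quantities become
\[
u = \frac{2 q^{3}\, \psi(q)\,\psi(q^{23})}{\varphi(q)\,\varphi(q^{23})}, \qquad v = \frac{\varphi(-q^{2})\,\varphi(-q^{46})}{\varphi(q)\,\varphi(q^{23})},
\]
with $w=(uv)^{1/3}$, while $m = \varphi^{2}(q)/\varphi^{2}(q^{23})$ by (\ref{eq07}). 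The modular equation (\ref{eq13}) thus reads $u + v + 2^{2/3} w = 1$, i.e.\ a theta function identity at level $23$. To prove it, I would isolate the cube root $2^{2/3} w$ and cube, obtaining a polynomial identity in the six theta constants above. That identity is the classical level-$23$ modular equation associated to $\Gamma_{0}(23)$; it can be verified either by comparing $q$-expansions to sufficient order within an explicit finite-dimensional space of modular forms, or by invoking one of Ramanujan's eta-product identities at level $23$. This is where I expect the main obstacle to sit: level $23$ is beyond the genus-zero range, so there is no one-line Weber-type shortcut, and the verification requires a substantial algebraic reduction rather than a routine manipulation.

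Granting (\ref{eq13}), I would then compute $m - 23/m$ directly from (\ref{eq07}) and match it against the right-hand side of (\ref{eq14}). Ramanujan's multiplier function equation, specialized to odd $n \equiv 7 \pmod 8$, factors $m - n/m$ as $2(u-v)$ times a polynomial $P_{n}(u,v,w)$, in parallel with the degree-$7$ identity (\ref{eq11}). Using the constraint $u+v = 1 - 2^{2/3} w$ from (\ref{eq13}) together with $uv = w^{3}$, the polynomial $P_{23}$ collapses from a bivariate expression in $(u,v)$ to a univariate expression in $w$, namely $11 - 13\cdot 4^{1/3} w + 18 \cdot 2^{1/3} w^{2} - 14\, w^{3} + 2^{5/3} w^{4}$, exactly as in (\ref{eq14}). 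Matching the five numerical coefficients $11,\ 13,\ 18,\ 14,\ 2^{5/3}$ is then a finite check: one compares sufficiently many initial $q$-coefficients of $m - 23/m$ against the conjectured right-hand side, and the underlying modularity guarantees that a bounded truncation clinches the identity.
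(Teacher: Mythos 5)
You should note at the outset that the paper does not prove this theorem: it is quoted from Berndt's book (Entry 15, p.~411), and the paper itself only remarks that Berndt established (\ref{eq14}) ``by the theory of modular forms.'' Measured against that cited proof, your outline is the same route --- express $u,v,w$ as theta quotients via (\ref{eq07})--(\ref{eq09}) (your formulas for $u$, $v$ and the relation $w^3=uv$ are correct), and reduce both (\ref{eq13}) and (\ref{eq14}) to identities among modular functions to be certified by a finite $q$-expansion check. The observation that (\ref{eq13}) collapses the bracket in (\ref{eq14}) to a polynomial in $w$ alone, because every symmetric function of $u,v$ is expressible through $u+v=1-2^{2/3}w$ and $uv=w^3$, is also sound.

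The difficulty is that the two steps you defer are the entire content of the theorem, and as written they are invoked rather than executed. ``Compare $q$-expansions to sufficient order within an explicit finite-dimensional space of modular forms'' becomes a proof only after you exhibit that space: you must show that $w=\{\alpha\beta(1-\alpha)(1-\beta)\}^{1/24}$ and its companions are genuine modular functions on an explicit group (they are eta-quotients, but the $24$th root forces a check of multiplier systems and of which branch is taken), locate their poles at the cusps, and compute a valence or Sturm bound before any truncated comparison ``clinches'' the identity; none of that appears in the proposal. A second, smaller misstep is the claim that Ramanujan's multiplier function equation, ``specialized to odd $n\equiv 7\pmod 8$,'' factors $m-n/m$ as $2(u-v)P_n(u,v,w)$: equation (\ref{eq22}) is only the differential relation $n\,\mathrm{d}\alpha/\mathrm{d}\beta=\frac{\alpha(1-\alpha)}{\beta(1-\beta)}m^2$, and extracting an expression for $m-n/m$ from it requires the explicit computation the paper performs in (\ref{eq25})--(\ref{eq26}); there is no general factorization theorem for $n\equiv 7\pmod 8$ to specialize. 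Finally, be aware that the paper's own method for its new results --- a rational parameterization of the modular equation as in (\ref{eq29})--(\ref{eq31}) combined with (\ref{eq22}) --- does not transfer here: writing (\ref{eq13}) as $a^3+b^3+2^{2/3}ab=1$ with $a=(\alpha\beta)^{1/24}$ and $b=\{(1-\alpha)(1-\beta)\}^{1/24}$ yields a smooth plane cubic, hence a genus-one curve with no rational parameterization, unlike the genus-zero curve (\ref{eq27}). So the modular-forms route you chose is essentially forced, but it must actually be carried out.
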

Berndt finished the proof of (\ref{eq14}) by the theory of modular forms (See \cite{Bruce1991},Entry 15, pp. 415-416). In this paper, we also find some new Ramanujan’s modular equations involving multiplier with moduli of degrees (3,5) and (1,15).

\section{Some New Ramanujan’s Modular Equations of  degrees 15 and $\frac{5}{3}$}
The beautiful Ramanujan’s modular equations of degrees 15 and $\frac{5}{3}$ (See \cite{Bruce1991},Entry 21, p435): Let $\alpha$ and $\beta$ have degrees $(n_1,n_2)=(3,5)$; or $(1, 15)$, respectively. Then
\begin{equation}\label{eq15}
	(\alpha \beta)^{1 / 8}+\{(1-\alpha)(1-\beta)\}^{1 / 8} \pm\{\alpha \beta(1-\alpha)(1-\beta)\}^{1 / 8} =\sqrt{\frac{1}{2}(1+\sqrt{\alpha \beta}+\sqrt{(1-\alpha)(1-\beta)})},
\end{equation}
where the minus sign is chosen in the first case and the plus sign is selected in the last case.

For modular equations in the form of Russell, we redefine (See \cite{Bruce1998},Entry 21, p435),
\begin{equation}\label{eq16}
	\left\{
	\begin{aligned}
		& P=1+(-1)^{\frac{n1+n2}{8}} \left((\alpha \beta)^{1 / 8}+\{(1-\alpha)(1-\beta)\}^{1 / 8}\right),
		\\
		& Q=4\left((\alpha \beta)^{1 / 8}+\{(1-\alpha)(1-\beta)\}^{1 / 8} +(-1)^{\frac{n1+n2}{8}} \{\alpha \beta(1-\alpha)(1-\beta)\}^{1 / 8}\right),\\
		&R=4(\alpha \beta(1-\alpha)(1-\beta))^{1 / 8} ,
	\end{aligned}\right.
\end{equation}
then, if $\alpha$ and $\beta$ have degrees $(n_1,n_2)=(1, 15)$ (See \cite{Bruce1998},Entry 21, p435),
\begin{equation}\label{eq17}
	P(P^2-Q)+R=0,
\end{equation}
and, if $\alpha$ and $\beta$ have degrees $(n_1,n_2)=(3, 5)$,
\begin{equation}\label{eq18}
	P(P^2+Q)+R=0.
\end{equation}
We can verify that these two modular equations (\ref{eq17}) and (\ref{eq18}) are equivalent formulations of (\ref{eq15}).

\begin{theorem}[new modular equations for degrees $(n_1,n_2)=(1, 15)$] Let $\alpha$ and $\beta$ have degrees $(n_1,n_2)=(1, 15)$, and $m=\frac{z_1}{z_{15}}$ is the multiplier for degree $n=15$, then 
	\begin{enumerate}[(i)]
		\item natural form	
		\begin{equation}\label{eq19}	
			\begin{aligned}
				m-\frac{15}{m} =&2 \left((\alpha \beta)^{1 / 8}-\{(1-\alpha)(1-\beta)\}^{1 / 8}\right) \left[1+3\left((\alpha \beta)^{1 / 8}+\{(1-\alpha)(1-\beta)\}^{1 / 8}\right)\right.\\
				&\left.+3\left((\alpha \beta)^{1 / 4}+\{(1-\alpha)(1-\beta)\}^{1 / 4}\right)+2 (\alpha \beta (1-\alpha)(1-\beta))^{1 / 8}\left(3+(\alpha \beta)^{1 / 8}+\{(1-\alpha)(1-\beta)\}^{1 / 8}\right) \right],
			\end{aligned}
		\end{equation}		
		\item emphatic form		
		\begin{equation}\label{eq20}	
			\begin{aligned}
				m-\frac{15}{m} =&2 \left((\alpha \beta)^{1 / 4}-\{(1-\alpha)(1-\beta)\}^{1 / 4}\right) \left[4\sqrt{\frac{1}{2}(1+\sqrt{\alpha \beta}+\sqrt{(1-\alpha)(1-\beta)})}\right.\\
				&\left.+4-\left((\alpha \beta)^{1 / 4}+\{(1-\alpha)(1-\beta)\}^{1 / 4}\right) \right],
			\end{aligned}
		\end{equation}	
		\item Ramanujan’s theta functions form	
		\begin{equation}\label{eq21}	
			\begin{aligned}
				\frac{\varphi^2(q)}{\varphi^2(q^{15})}-15 \frac{\varphi^2(q^{15})}{\varphi^2(q)}=&2 \left(4 q^4 \frac{\psi(q^2) \psi(q^{30})}{\varphi(q)\varphi(q^{15})}-\frac{\varphi (-q)\varphi (-q^{15})}{\varphi(q)\varphi(q^{15})}\right) \left[4\left(\frac{\varphi (q^2) \varphi (q^{30})}{\varphi (q) \varphi \left(q^{15}\right)}+4 q^8\frac{ \psi (q^4) \psi(q^{60})}{\varphi (q) \varphi \left(q^{15}\right)}\right)\right.\\
				&\left.+4-\left(4 q^4 \frac{\psi(q^2) \psi(q^{30})}{\varphi(q)\varphi(q^{15})}+\frac{\varphi (-q)\varphi (-q^{15})}{\varphi(q)\varphi(q^{15})}\right) \right].
			\end{aligned}
		\end{equation}	
	\end{enumerate}
	
\end{theorem}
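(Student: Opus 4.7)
The plan is to establish the three forms in sequence: first the natural form (\ref{eq19}), then the emphatic form (\ref{eq20}) as an algebraic reformulation using the modular equation (\ref{eq15}), and finally the theta-function form (\ref{eq21}) via the parametric dictionary (\ref{eq08})--(\ref{eq09}).

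The core step is (\ref{eq19}). Following the template Berndt uses for (\ref{eq11}) in degree $7$ and (\ref{eq14}) in degree $23$, I would apply Ramanujan's multiplier function equation to
\[
m - \frac{15}{m} \,=\, \frac{\varphi^4(q) - 15\,\varphi^4(q^{15})}{\varphi^2(q)\,\varphi^2(q^{15})}
\]
and factor it as a product in which $A - B$, where $A := (\alpha\beta)^{1/8}$ and $B := \{(1-\alpha)(1-\beta)\}^{1/8}$, appears explicitly. Gathering the remaining factor in the elementary symmetric polynomials of $A$ and $B$ produces (\ref{eq19}). I expect this to be the main obstacle, since the explicit multiplier function equation for the composite degree $15$ is richer than its prime analogues $n = 7$ and $n = 23$.

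For the equivalence (\ref{eq19}) $\Leftrightarrow$ (\ref{eq20}), set $S = A + B$, $U = AB$, and $T = A^2 + B^2 = S^2 - 2U$. The bracket of (\ref{eq19}) simplifies to $1 + 3S + 3S^2 + 2SU$. Using the plus-sign case of (\ref{eq15}), which applies to $(1,15)$, one has $4\sqrt{\tfrac{1}{2}(1 + \sqrt{\alpha\beta} + \sqrt{(1-\alpha)(1-\beta)})} = 4(S+U)$; together with $A^2 - B^2 = (A-B)\,S$, the bracket of (\ref{eq20}) multiplied by $S$ becomes $4S^2 + 6SU + 4S - S^3$. The two brackets differ exactly by $\pm\bigl[(1+S)(1-S)^2 - 4SU\bigr]$, which vanishes by the algebraic form (\ref{eq17}) of the modular equation.

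Finally, (\ref{eq21}) follows from (\ref{eq20}) by direct substitution using (\ref{eq08}): $(\alpha\beta)^{1/4} = 4q^4 \psi(q^2)\psi(q^{30})/[\varphi(q)\varphi(q^{15})]$ and $\{(1-\alpha)(1-\beta)\}^{1/4} = \varphi(-q)\varphi(-q^{15})/[\varphi(q)\varphi(q^{15})]$, together with the theta identification
\[
\frac{\varphi(q^2)\varphi(q^{30}) + 4q^8\,\psi(q^4)\psi(q^{60})}{\varphi(q)\varphi(q^{15})} \,=\, \sqrt{\tfrac{1}{2}\bigl(1 + \sqrt{\alpha\beta} + \sqrt{(1-\alpha)(1-\beta)}\bigr)}.
\]
Squaring this last relation and expanding via $\varphi^2(q) = \varphi^2(q^2) + 4q\psi^2(q^4)$ (applied at both $q$ and $q^{15}$) together with the Ramanujan identity $\psi^2(q) = \varphi(q)\psi(q^2)$ reduces it to a routine theta-series check, completing the proof.
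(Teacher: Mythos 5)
Your reductions for parts (ii) and (iii) are sound and essentially coincide with the paper's: writing $S=A+B$, $U=AB$, the bracket of (\ref{eq19}) is indeed $1+3S+3S^2+2SU$, the modular equation (\ref{eq17}) becomes $(1+S)(1-S)^2-4SU=0$ (the paper's (\ref{eq27})), and $S$ times the bracket of (\ref{eq20}) differs from the bracket of (\ref{eq19}) by exactly that vanishing quantity; the passage to (\ref{eq21}) via (\ref{eq08}) and the theta identification of $\sqrt{\tfrac{1}{2}(1+\sqrt{\alpha\beta}+\sqrt{(1-\alpha)(1-\beta)})}$ is also the paper's route, except that the paper simply quotes Berndt's identity (20.6) and Entries 10--11 rather than re-deriving the identification by squaring.

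The genuine gap is part (i), which is the heart of the theorem and which you explicitly defer ("I expect this to be the main obstacle"). The identity $m-\frac{15}{m}=\frac{\varphi^4(q)-15\,\varphi^4(q^{15})}{\varphi^2(q)\varphi^2(q^{15})}$ is just the definition $m=\varphi^2(q)/\varphi^2(q^{15})$ restated, not Ramanujan's multiplier function equation, and ``factor it as a product in which $A-B$ appears explicitly'' is an instruction rather than an argument: nothing in your text produces the bracket $1+3S+3S^2+2SU$. The template you invoke is also unreliable here, since Berndt's proof of the degree-$23$ analogue (\ref{eq14}) proceeds via the theory of modular forms, as the paper itself notes. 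What the paper actually does is use the differential form of the multiplier equation, $n\,\frac{d\alpha}{d\beta}=\frac{\alpha(1-\alpha)}{\beta(1-\beta)}m^2$, to write $\left(m-\frac{n}{m}\right)^2$ as a rational expression in $x=(\alpha\beta)^{1/8}$, $y=\{(1-\alpha)(1-\beta)\}^{1/8}$ and their $t$-derivatives, then rationally parameterize the curve (\ref{eq27}) by $x=\frac{1}{2}\left(\frac{1}{t}-\varrho\right)$, $y=\frac{1}{2}\left(\frac{1}{t}+\varrho\right)$ with $\varrho=\sqrt{1+t-t^2}/\sqrt{t}$, reduce both $\left(m-\frac{15}{m}\right)^2$ and the square of the claimed right-hand side of (\ref{eq19}) to explicit functions of $t$, and check that they agree (up to the sign determination). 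Without this computation, or some substitute such as a modular-forms argument, part (i) --- and hence the whole theorem, since your (ii) and (iii) are derived from it --- remains unproved.
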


\begin{proof} (i).	According to Ramanujan’s multiplier function equation (See \cite{Bruce1991},Entry 24(vi), p. 217), we can write  
	\begin{equation}\label{eq22}
		n \frac{\operatorname{d} \alpha}{\operatorname{d} \beta}=\frac{\alpha(1-\alpha)}{\beta(1-\beta)} m^2 .
	\end{equation}
	
	Let 
	\begin{equation}\label{eq23}
		\begin{aligned}
			&x:=x(t)=(\alpha \beta)^{1 / 8},\\
			&y:= y(t) =\{(1-\alpha)(1-\beta)\}^{1 / 8},
		\end{aligned}
	\end{equation}
	we can get 
	\begin{equation} \label{eq24}
		\begin{aligned}
			&\alpha:=\alpha(t) = \frac{1}{2} \left(1+x^8-y^8+\sqrt{1-2 x^8-2 y^8-2 x^8 y^8+x^{16}+y^{16}}\right),\\
			&\beta:=\beta(t) = \frac{1}{2} \left(1+x^8-y^8-\sqrt{1-2 x^8-2 y^8-2 x^8 y^8+x^{16}+y^{16}}\right),
		\end{aligned}
	\end{equation}
	and using the equations (\ref{eq22}) - (\ref{eq24}), we deduce
	\begin{equation}\label{eq25}
		\frac{n}{m^2} = \frac{\alpha(1-\alpha)  }{\beta(1-\beta)} \frac{\operatorname{d}\beta/\operatorname{d} t}{\operatorname{d}\alpha/\operatorname{d} t}=-\frac{\alpha  y x'(t)+(1-\alpha ) x y'(t)}{\beta  y x'(t)+(1-\beta ) x y'(t)}, 
	\end{equation}
	and	
	\begin{equation}\label{eq26}
		\left(m-\frac{n}{m}\right)^2=-\frac{n \left(y (\alpha +\beta ) x'(t)+x (-\alpha -\beta +2) y'(t)\right)^2}{\left(\alpha  y x'(t)+(1-\alpha ) x y'(t)\right) \left(\beta  y x'(t)+(1-\beta ) x y'(t)\right)}.
	\end{equation}
	
	Again, from the equation (\ref{eq17}), we have equivalent modular equation of degrees 15 in the form of $x$ and $y$, it reads
	\begin{equation}\label{eq27}
		1 - x - x^2 + x^3 - y - 2 x y - x^2 y - y^2 - x y^2 + y^3=0,
	\end{equation}
	from the equation (\ref{eq19}) and (\ref{eq23}), we also have
	\begin{equation}\label{eq28}
		m-\frac{15}{m} =2(x-y) (1 + 3 x + 3 x^2 + 3 y + 6 x y + 2 x^2 y + 3 y^2 + 2 x y^2).
	\end{equation}
	
	Now set
	\begin{equation}\label{eq29}
		x = \frac{1}{2} \left(\frac{1}{t}-\varrho \right),\quad y = \frac{1}{2} \left(\frac{1}{t}+\varrho \right),
	\end{equation}
	using the equation (\ref{eq27}), we obtain
	\begin{equation}\label{eq30}
		t \varrho^2= 1 + t - t^2,
	\end{equation}
	solving the above equation for $\varrho$ and notice that $x<y$, we get
	\begin{equation}\label{eq31}
		\varrho = \frac{\sqrt{1+t-t^2}}{\sqrt{t}},x = \frac{1}{2} \left(\frac{1}{t}-\frac{\sqrt{1+t-t^2}}{\sqrt{t}}\right),\quad y = \frac{1}{2} \left(\frac{1}{t}+\frac{\sqrt{1+t-t^2}}{\sqrt{t}} \right).
	\end{equation}
	
	Employing (\ref{eq31}) and (\ref{eq24}) in (\ref{eq26}) with $n=15$, we deduce
	\begin{equation}\label{eq32}
		\left(m-\frac{15}{m}\right)^2= \frac{\left(1+t-t^2\right) \left(1 + 5 t + 5 t^2 + 3 t^3\right)^2}{t^7},
	\end{equation}
	and employing (\ref{eq31}) in (\ref{eq28}), we obtain 
	\begin{equation}\label{eq33}
		m-\frac{15}{m} =-\frac{\sqrt{1+t-t^2} \left(1 + 5 t + 5 t^2 + 3 t^3\right)}{t^{7/2}}.
	\end{equation}

We can verify that these two equations (\ref{eq32}) and (\ref{eq33}) are equivalent.	So far, we employed the method of parameterization to prove the modular equation of degree 15 which involves the multiplier $m$.
\end{proof}

\begin{proof} (ii).
	Substituting (\ref{eq27}) into (\ref{eq28}),  we obtain
	\begin{equation}\label{eq34}
		\begin{aligned}
			L&=(1 + 3 x + 3 x^2 + 3 y + 6 x y + 2 x^2 y + 3 y^2 + 2 x y^2)-0\\
			&=L-(1 - x - x^2 + x^3 - y - 2 x y - x^2 y - y^2 - x y^2 + y^3)\\
			&=(x + y) (4 (x + y + x y)+4 - x^2 - y^2),
		\end{aligned}
	\end{equation}
	By using the equation (\ref{eq15}), we can write 
	\begin{equation}\label{eq35}
		x+y+xy =\sqrt{\frac{1}{2}(1+\sqrt{\alpha \beta}+\sqrt{(1-\alpha)(1-\beta)})},
	\end{equation}
	Substituting (\ref{eq34}) and (\ref{eq35}) into  (\ref{eq28}),  we arrive at
	\begin{equation}\label{eq36}
		m-\frac{15}{m} =2(x^2-y^2) (4 \sqrt{\frac{1}{2}(1+\sqrt{\alpha \beta}+\sqrt{(1-\alpha)(1-\beta)})}+4 - x^2 - y^2).
	\end{equation}
	which completes the proof.	
\end{proof}	

\begin{proof} (iii).If $\beta$ over $\alpha$ is degree 15, we employ the identity (See \cite{Bruce1991},p. 433 eq. (20.6))
	\begin{equation}\label{eq37}	
		\sqrt{\frac{1}{2}(1+\sqrt{\alpha \beta}+\sqrt{(1-\alpha)(1-\beta)})}=\frac{\sqrt{1-\sqrt{1-\alpha }}}{\sqrt{2}}\frac{\sqrt{1-\sqrt{1-\beta }}}{\sqrt{2}}+\frac{\sqrt{1+\sqrt{1-\alpha }}}{\sqrt{2}}\frac{\sqrt{1+\sqrt{1-\beta }}}{\sqrt{2}},
	\end{equation}
	and translate this identity in form of Ramanujan’s theta functions (See \cite{Bruce1991},Entry 10 and 11, pp. 122-123)
	\begin{equation}\label{eq38}	
		\sqrt{\frac{1}{2}(1+\sqrt{\alpha \beta}+\sqrt{(1-\alpha)(1-\beta)})}=\frac{\varphi (q^2) \varphi (q^{30})}{\varphi (q) \varphi \left(q^{15}\right)}+4 q^8\frac{ \psi (q^4) \psi(q^{60})}{\varphi (q) \varphi \left(q^{15}\right)},
	\end{equation}
	and  use (\ref{eq08}), we get
	\begin{equation}\label{eq39}	
		(\alpha \beta)^{1 / 4}=4 q^4 \frac{\psi(q^2) \psi(q^{30})}{\varphi(q)\varphi(q^{15})},\quad\{(1-\alpha)(1-\beta)\}^{1 / 4}=\frac{\varphi (-q)\varphi (-q^{15})}{\varphi(q)\varphi(q^{15})}.
	\end{equation}
	Substituting (\ref{eq38}) and (\ref{eq39}) into (\ref{eq20}),	which completes the proof.	
\end{proof}

\begin{theorem}[new modular equations for degrees $(n_1,n_2)=(3, 5)$] Let $\alpha$ and $\beta$ have degrees $(n_1,n_2)=(3, 5)$, and $m=\frac{z_3}{z_{5}}$ is the multiplier for degree $n=\frac{5}{3}$, then 
	\begin{enumerate}[(i)]
		\item natural form	
		\begin{equation}\label{eq40}
			\begin{aligned}
				m-\frac{5}{3m} =&\frac{2}{3}  \left((\alpha \beta)^{1 / 8}-\{(1-\alpha)(1-\beta)\}^{1 / 8}\right) \left[1-3\left((\alpha \beta)^{1 / 8}+\{(1-\alpha)(1-\beta)\}^{1 / 8}\right)\right.\\
				&\left.+3\left((\alpha \beta)^{1 / 4}+\{(1-\alpha)(1-\beta)\}^{1 / 4}\right)+2 (\alpha \beta (1-\alpha)(1-\beta))^{1 / 8}\left(3-(\alpha \beta)^{1 / 8}-\{(1-\alpha)(1-\beta)\}^{1 / 8}\right) \right],
			\end{aligned}
		\end{equation}		
		\item emphatic form		
		\begin{equation}\label{eq41}	
			\begin{aligned}
				m-\frac{5}{3m} =&\frac{2}{3} \left((\alpha \beta)^{1 / 4}-\{(1-\alpha)(1-\beta)\}^{1 / 4}\right) \left[4\sqrt{\frac{1}{2}(1+\sqrt{\alpha \beta}+\sqrt{(1-\alpha)(1-\beta)})}\right.\\
				&\left.-4+\left((\alpha \beta)^{1 / 4}+\{(1-\alpha)(1-\beta)\}^{1 / 4}\right) \right],
			\end{aligned}
		\end{equation}	
		\item 	Ramanujan’s theta functions form	
		\begin{equation}\label{eq42}	
			\begin{aligned}
				\frac{\varphi^2(q^3)}{\varphi^2(q^{5})}-\frac{5}{3} \frac{\varphi^2(q^{5})}{\varphi^2(q^3)}=&\frac{2}{3} \left(\frac{4 q^2 \psi(q^6) \psi (q^{10})}{\varphi (q^3) \varphi \left(q^5\right)}-\frac{\varphi (-q^3) \varphi (-q^5)}{\varphi (q^3) \varphi \left(q^5\right)}\right) \left[4\left(\frac{\varphi (q^6) \varphi (q^{10}))}{\varphi (q^3) \varphi \left(q^5\right)}+\frac{4 q^4 \psi (q^{12})) \psi (q^{20}))}{\varphi (q^3) \varphi \left(q^5\right)}\right)\right.\\
				&\left.-4+\left(\frac{4 q^2 \psi(q^6) \psi (q^{10})}{\varphi (q^3) \varphi \left(q^5\right)}+\frac{\varphi (-q^3) \varphi (-q^5)}{\varphi (q^3) \varphi \left(q^5\right)}\right) \right].
			\end{aligned}
		\end{equation}	
	\end{enumerate}
	
\end{theorem}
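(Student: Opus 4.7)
The plan is to mirror the three-step strategy of Theorem 2.1, keeping the same ansatz $x = (\alpha\beta)^{1/8}$, $y = \{(1-\alpha)(1-\beta)\}^{1/8}$ of (23) together with the parameterization (29). The key difference is that for $(n_1,n_2)=(3,5)$ we have $(-1)^{(n_1+n_2)/8} = -1$, so the Russell-form quantities in (16) become $P = 1 - x - y$, $Q = 4(x+y-xy)$, $R = 4xy$. Expanding $P(P^2+Q)+R = 0$ from (18) produces the polynomial identity
\[
1 + x + y - x^2 - 2xy - y^2 - x^3 + x^2 y + xy^2 - y^3 = 0,
\]
which plays the role that (27) played for degree 15, and the sign flips throughout (40) relative to (19) reflect exactly this change.

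For part (i), I would apply Ramanujan's multiplier function equation (22) with $n = 5/3$, obtaining an expression for $(m - 5/(3m))^2$ in the same form as (26). Substituting (29) into the polynomial identity above, the relation collapses to $t\varrho^2 = t^2 + t - 1$, whence $\varrho = \sqrt{(t^2+t-1)/t}$, the $(3,5)$ counterpart of (31). I would then substitute this parameterization into both sides of (40) and into the squared multiplier expression, and verify that both simplify to the same rational function of $t$, in analogy with (32)--(33).

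For part (ii), starting from the bracket $L$ on the right of (40) and subtracting the polynomial modular equation above (which equals zero), the cubic terms cancel and yield the factorization $L = (x+y)[4(x+y-xy) - 4 + x^2 + y^2]$. Using the $(3,5)$-case of (15), namely $x + y - xy = \sqrt{\frac{1}{2}(1 + \sqrt{\alpha\beta} + \sqrt{(1-\alpha)(1-\beta)})}$, together with the identity $(x-y)(x+y) = x^2 - y^2$, then produces (41). Part (iii) follows by transplanting the proof of Theorem 2.1(iii): the algebraic identity (37) is independent of the particular degrees, so substituting $q, q^n \mapsto q^3, q^5$ and applying (8) and (9) gives
\[
\sqrt{\tfrac{1}{2}(1+\sqrt{\alpha\beta}+\sqrt{(1-\alpha)(1-\beta)})} = \frac{\varphi(q^6)\varphi(q^{10})}{\varphi(q^3)\varphi(q^5)} + \frac{4 q^4 \psi(q^{12}) \psi(q^{20})}{\varphi(q^3)\varphi(q^5)},
\]
which combined with the translations $(\alpha\beta)^{1/4} = 4q^2 \psi(q^6)\psi(q^{10})/[\varphi(q^3)\varphi(q^5)]$ and $\{(1-\alpha)(1-\beta)\}^{1/4} = \varphi(-q^3)\varphi(-q^5)/[\varphi(q^3)\varphi(q^5)]$ converts (41) into (42).

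The main obstacle is the sharp factorization in part (ii): the natural-form bracket $L$ does not visibly contain $(x+y)$, so the cancellation of cubic terms after subtracting the polynomial modular equation must happen exactly, and this step carries essentially all of the content specific to the $(3,5)$ case. Beyond this, the factor $1/3$ and the sign of $\sqrt{t^2+t-1}$ in part (i) must be tracked carefully, since the degree is now the rational $5/3$ rather than the integer $15$; these are otherwise routine adjustments to the calculations already carried out for Theorem 2.1.
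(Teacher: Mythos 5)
Your proposal is correct and follows exactly the route the paper intends: the paper omits the proof of Theorem 2.2, stating only that it is "similar to the proof of Theorem 2.1," and your adaptation supplies precisely those details — the sign flip from $(-1)^{(n_1+n_2)/8}=-1$ giving the polynomial $1+x+y-x^2-2xy-y^2-x^3+x^2y+xy^2-y^3=0$, the parameterization collapsing to $t\varrho^2=t^2+t-1$, the factorization $L=(x+y)\left[4(x+y-xy)-4+x^2+y^2\right]$, and the theta-function translation with $q^3,q^5$ in place of $q,q^{15}$ — all of which check out. No gaps; this is the same method, correctly executed for the $(3,5)$ case.
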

The proof of Theorem 2.2 is similar to the proof of the Theorem 2.1, hence we omit the details.

\section*{Acknowledgment}
The authors would like to thank the anonymous referee for many invaluable suggestions. 





\bibliographystyle{elsarticle-num}
\bibliography{Ramanujan_of_15}



	
	
	

\end{document}